\newcommand\NoBlackBoxes{\global\overfullrule0pt}
\newcommand{\N}{\mathbb{N}}
\newcommand{\artanh}{\mathrm{artanh}}
\let\serieslogo@\relax
\let\@setcopyright\relax
\newtheorem{definition}{Definition}[section]
\newtheorem{theorem}[definition]{Theorem}
\newtheorem{lemma}[definition]{Lemma}
\newenvironment{remark}[1][Remark]{\begin{trivlist}
\item[\hskip \labelsep {\bfseries #1}]}{\end{trivlist}}
\renewcommand{\P}{{\mathbb{P}}}
\newcommand{\E}{{\mathbb{E}}}
\newcommand{\R}{{\mathbb{R}}}
\newcommand{\V}{\mathbb{V}}
\renewcommand{\epsilon}{\varepsilon}
\renewcommand{\phi}{\varphi}
\numberwithin{equation}{section}
\begin{document}

\setcounter{page}{1}

\title[Fluctuations for block spin Ising models]{Fluctuations for block spin Ising models}

\author[Matthias L\"owe]{Matthias L\"owe}
\address[Matthias L\"owe]{Fachbereich Mathematik und Informatik,
Universit\"at M\"unster,
Orl\'{e}ans-Ring 10,
48149 M\"unster,
Germany}

\email[Matthias L\"owe]{maloewe@math.uni-muenster.de}

\author[Kristina Schubert]{Kristina Schubert}
\address[Kristina Schubert]{Fachbereich Mathematik und Informatik,
Universit\"at M\"unster,
Orl\'{e}ans-Ring 10,
48149 M\"unster,
Germany}

\email[Kristina Schubert]{kristina.schubert@uni-muenster.de}

\keywords{Ising model, Curie-Weiss model, fluctuations, Central Limit Theorem, block model}

\newcommand{\wlim}{\mathop{\hbox{\rm w-lim}}}
\newcommand{\na}{{\mathbb N}}
\newcommand{\re}{{\mathbb R}}

\newcommand{\vep}{\varepsilon}

\begin{abstract}
We analyze the high temperature fluctuations of the magnetization of the so-called Ising block model. This model was recently introduced by Berthet, Rigollet and Srivastava in \cite{BRS17_blockmodel}. We prove a Central Limit Theorems (CLT) for the magnetization in the high temperature regime. At the same time we show that this CLT breaks down at a line of critical temperatures. At this line we show the validity of a non-standard Central Limit Theorems for the magnetization.
\end{abstract}

\maketitle

\section{Introduction}
In a recent paper Berthet, Rigollet and Srivastava studied a block version of the Curie-Weiss-Ising model \cite{BRS17_blockmodel}.
This model is inspired by extensive studies of block models in the recent past, see e.g.~\cite{AL18}, \cite{Bre15}, \cite{BMS13}, \cite{GMZZ17}, \cite{MNS16}.
On the other hand, similar models were considered a bit earlier in the statistical mechanics literature, see
\cite{collet_CW},
\cite{CGM},
\cite{Fedele_U},
\cite{gallo_barra_contucci},
\cite{gallo_contucci_CW}.
To define our model, we partition the set $\{1, \ldots, N\}$ for  $N$ even into a set $S\subset \{1, \ldots, N\}$ with $|S|=\frac{N}{2}$ and  its complement $S^c$. This segmentation induces a
partitioning of the binary hypercube $\{-1,+1\}^N, N\in \N$ the state space of the {\it Ising block model}.
For $\beta>0$ and $0\le \alpha \le \beta$ the model we will consider is defined
by the Hamiltonian
\[
H_{N,\alpha, \beta, S}(\sigma):= -\frac \beta {2N} \sum_{i \sim j} \sigma_i \sigma_j -\frac \alpha {2N} \sum_{i \not\sim j} \sigma_i \sigma_j, \quad \sigma \in \{-1,+1\}^N.
\]
Here we write $i \sim j$, if either $i, j \in S$ or $i,j \in S^c$ and $i \not \sim j$ otherwise.
This Hamiltonian induces a Gibbs measure
\begin{equation}\label{Hamiltonian}
\mu_{N, \alpha, \beta} (\sigma)=\mu_{N, \alpha, \beta,S} (\sigma):= \frac{e^{-H_{N,\alpha, \beta}(\sigma)}}{\sum_{\sigma'}e^{-H_{N,\alpha, \beta}(\sigma')}}=:
\frac{e^{-H_{N,\alpha, \beta}(\sigma)}}{Z_{N, \alpha,\beta}}.
\end{equation}
A closely related version of this model has been investigated in \cite{werner_curie_weiss}. However, the couplings in \cite{werner_curie_weiss} between the blocks have the same strength of interaction as the couplings within a block. We were informed that a more general version of the model will be studied in
\cite{werner_curie_weiss2}.
Similar to the Curie-Weiss model the Ising block model has an order parameter: the vector of block magnetizations, $m:=m^N:=(m^N_1, m^N_2)$, where
\[
m_1:=m^N_1:=m_1(\sigma):= \frac {2} {  N} \sum_{i \in S} \sigma_i \qquad \mbox{and } \quad  m_2:=m^N_2:=m_2(\sigma):= \frac {2} {N} \sum_{i \notin S} \sigma_i.
\]
Indeed, the Hamiltonian is handily rewritten as
\begin{equation*}
H_{N,\alpha, \beta, S}(\sigma)= -\frac N 2 \left(\frac{1}{2}\alpha m_1 m_2 +\beta \frac{1}{4} m_1^2+ \frac{1}{4} \beta m_2^2\right).
\end{equation*}
This observation is not only a convenient way to analyze the block spin Ising model, it also makes $m$ an obvious choice to describe its behaviour and its phase transitions. To characterize them, recall that with the above notation for $\alpha  =\beta$ one reobtains the Curie-Weiss or mean-field Ising model at inverse temperature $\beta$, i.e. the model on $\{-1,+1\}^N$ given by $H_{CW}(\sigma)= \frac 1{2N}\sum_{i,j} \sigma_i \sigma_j$ and Gibbs measure
$
\mu^{CW}_{N,\beta}(\sigma)=
\frac{e^{-\beta H_{CW}(\sigma)}}{Z^{CW}_{N,\beta}}.
$
Also, recall (\cite{Ellis-EntropyLargeDeviationsAndStatisticalMechanics}) that the Curie-Weiss model undergoes a phase transition at $\beta=~1$. This phase transition can be described by saying that the distribution of the parameter $\overline{m} = \frac{1}{N}\sum_i \sigma_i$ (also called the magnetization) weakly converges to the Dirac measure in 0, $\delta_0$, if $\beta \le 1$ while it converges to the mixture $\frac 12 (\delta_{m^+(\beta)}+ \delta_{-m^+(\beta)})$, if $\beta >1$. Here $m^+(\beta)$ is the largest solution of the so-called Curie-Weiss equation
$
z = \tanh(\beta z).
$
A similar result was proven for the block spin Ising model  in \cite{BRS17_blockmodel} (the authors also allow for negative values of $\alpha$). There the authors (implicitly) show
\begin{theorem}cf. \cite[Proposition 4.1]{BRS17_blockmodel}\label{theo1}
In the above assume that $0\le \alpha < \beta$ and denote by $\rho_{N,\alpha,\beta}$ the distribution of $m$ under the Gibbs measure $\mu_{N, \alpha, \beta} $. Then
\begin{itemize}
\item If $\beta +\alpha \le 2$, then $\rho_{N,\alpha,\beta}$ weakly converges to the Dirac measure in $(0,0)$.
\item If $\beta +\alpha > 2$ and $\alpha=0$ then $\rho_{N,\alpha,\beta}$ weakly converges to the mixture of Dirac measures $\frac 1 4 \sum_{ s_1, s_2 \in \{-,+\}} \delta_{(s_1 m^+(\beta/2), s_2 m^+(\beta/2))}$.
\item If $\beta +\alpha > 2$ and $\alpha>0$ then $\rho_{N,\alpha,\beta}$ weakly converges to the mixture of Dirac measures
$\frac 1 2 (\delta_{(m^+(\frac{\alpha+\beta}2),  m^+(\frac{\alpha+\beta}2))}+\delta_{(-m^+(\frac{\alpha+\beta}2),  -m^+(\frac{\alpha+\beta}2)}) $.
\end{itemize}
\end{theorem}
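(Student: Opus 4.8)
\emph{Proof idea.} The plan is a Laplace / large--deviations analysis of the law of the order parameter $m=(m_1,m_2)$, followed by a symmetry argument to pin down the weights. Since $m_1$ depends only on the $S$--spins and $m_2$ only on the $S^c$--spins, and $|S|=|S^c|=N/2$, Stirling's formula gives, uniformly over the admissible values $(x,y)\in[-1,1]^2$,
\[
\#\{\s:\ m_1(\s)=x,\ m_2(\s)=y\}=\exp\!\Big(N\log 2-\tfrac N2\big(\mathcal I(x)+\mathcal I(y)\big)+O(\log N)\Big),
\]
where $\mathcal I(z)=\tfrac{1+z}2\log(1+z)+\tfrac{1-z}2\log(1-z)$, so that $\mathcal I'=\artanh$ and $\mathcal I(z)\ge z^2/2$ with equality only at $z=0$. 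Combining this with the rewriting of $H_{N,\alpha,\beta,S}$ in terms of $(m_1,m_2)$ given above shows that, up to subexponential factors, $\rho_{N,\alpha,\beta}(\{m=(x,y)\})$ is proportional to $e^{N\,G(x,y)}$, where
\[
G(x,y):=\tfrac14\alpha xy+\tfrac18\beta x^2+\tfrac18\beta y^2-\tfrac12\mathcal I(x)-\tfrac12\mathcal I(y).
\]
A routine Laplace estimate --- the exponential upper bound from the uniform Stirling count, and a matching lower bound on $Z_{N,\alpha,\beta}$ obtained by keeping only the lattice points near a maximizer of $G$ --- then yields that $\rho_{N,\alpha,\beta}$ concentrates exponentially fast on $\mathcal M:=\{(x,y)\in[-1,1]^2:\ G(x,y)=\max G\}$. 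It remains to identify $\mathcal M$ in the three regimes and to determine the mass ending up near each of its points.

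The key to identifying $\mathcal M$ is the elementary inequality $\tfrac14\alpha xy\le\tfrac18\alpha(x^2+y^2)$ (equality iff $x=y$, or $\alpha=0$), which decouples $G$:
\[
G(x,y)\ \le\ \tfrac12\,k(x)+\tfrac12\,k(y),\qquad k(z):=\tfrac12\cdot\tfrac{\alpha+\beta}2\,z^2-\mathcal I(z),
\]
with $k$ the Curie--Weiss functional at inverse temperature $(\alpha+\beta)/2$, whose stationarity equation is $z=\tanh\!\big(\tfrac{\alpha+\beta}2 z\big)$. If $\alpha+\beta\le2$, then together with $\mathcal I(z)\ge z^2/2$ this gives $G(x,y)\le\tfrac{x^2+y^2}4\big(\tfrac{\alpha+\beta}2-1\big)\le0=G(0,0)$; inspecting the equality cases (including the boundary value $\alpha+\beta=2$, where one uses that $\mathcal I(z)=z^2/2$ forces $z=0$) shows $\mathcal M=\{(0,0)\}$, hence $\rho_{N,\alpha,\beta}$ converges weakly to $\delta_{(0,0)}$. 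If $\alpha+\beta>2$ and $\alpha>0$, then $(\alpha+\beta)/2>1$, so $k$ attains its maximum exactly at $\pm m^+\!\big(\tfrac{\alpha+\beta}2\big)$ and has a strict local minimum at $0$; since the decoupling inequality is strict unless $x=y$, we get that $\mathcal M$ consists of the two points $(\pm m^+(\tfrac{\alpha+\beta}2),\pm m^+(\tfrac{\alpha+\beta}2))$ with equal signs. If $\alpha+\beta>2$ and $\alpha=0$, then $\beta>2$ and $G(x,y)=\tfrac12 k(x)+\tfrac12 k(y)$ \emph{exactly}, with $k$ the Curie--Weiss functional at $\beta/2>1$, so $\mathcal M$ is the set of four points $(s_1 m^+(\beta/2),s_2 m^+(\beta/2))$, $s_1,s_2\in\{-1,+1\}$.

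To get the weights, note that $H_{N,\alpha,\beta,S}$ is invariant under the global flip $\s\mapsto-\s$, so $\rho_{N,\alpha,\beta}$ is invariant under $(m_1,m_2)\mapsto(-m_1,-m_2)$; when $\alpha=0$ the Hamiltonian is a sum of two independent Curie--Weiss Hamiltonians on $S$ and $S^c$, hence additionally invariant under flipping all $S$--spins, which gives invariance of $\rho_{N,\alpha,\beta}$ under $(m_1,m_2)\mapsto(-m_1,m_2)$ as well. In each of the last two regimes these symmetries act transitively on $\mathcal M$; combined with the concentration on small disjoint balls around the points of $\mathcal M$, this forces each ball to carry asymptotically the same mass --- $1/2$ when $\alpha>0$, $1/4$ when $\alpha=0$ --- which is exactly the claimed mixture of Dirac measures.

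The one point requiring genuine care is the Laplace step, i.e.\ controlling the subexponential errors so that the exponential upper and lower bounds actually match; the degenerate case $\alpha+\beta=2$ of the first item is slightly special because the maximizer of $G$ is then flat, but the quadratic bound $\mathcal I(z)\ge z^2/2$ together with the crude estimate $\#\{m=(0,0)\}\ge c\,2^N/N$ still suffice to conclude that $\rho_{N,\alpha,\beta}$ converges weakly to $\delta_{(0,0)}$. By contrast, the determination of $\mathcal M$ in the second step is entirely elementary once the decoupling inequality is available, and in particular requires no study of the off-diagonal critical points of $G$.
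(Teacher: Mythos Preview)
Your argument is correct and, in spirit, follows the same Laplace/large--deviations route as the paper: both show that $\rho_{N,\alpha,\beta}$ concentrates exponentially fast on the set of maximizers of the function you call $G$ (the paper writes it as $F_m-\tilde J$). The paper reaches the LDP via G\"artner--Ellis and Varadhan's lemma, whereas you go through a direct Stirling count; these are interchangeable here.

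Where your proof genuinely diverges from the paper's is in the identification of $\mathcal M$. The paper writes down the stationarity equations $\tfrac12\beta x_i+\tfrac12\alpha x_{3-i}=\artanh(x_i)$, rewrites them as a fixed--point problem $x=f^{-1}(x)$ with $f(x)=\tfrac2\alpha(\artanh x-\tfrac\beta2 x)$, and applies Banach's fixed--point theorem to $f^{-1}$ (a strict contraction for $\alpha+\beta<2$, a weak contraction on the compact interval at $\alpha+\beta=2$, and a contraction away from $0$ for $\alpha+\beta>2$). Your decoupling inequality $\tfrac14\alpha xy\le\tfrac18\alpha(x^2+y^2)$ bypasses this analysis entirely: it reduces the two--dimensional variational problem to two independent one--dimensional Curie--Weiss problems at inverse temperature $(\alpha+\beta)/2$, from which the maximizers can be read off directly. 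This is both shorter and more conceptual; in particular it explains immediately why the critical line is $\alpha+\beta=2$ and why only diagonal maximizers survive when $\alpha>0$, without ever looking at off--diagonal critical points. The paper's fixed--point approach, on the other hand, yields somewhat more: it actually locates all critical points of $G$, not just the global maxima, which could be useful for finer metastability questions.

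A final point in your favor: the paper's Step~2 stops at showing exponential concentration on $\mathcal M$ and does not spell out why the limiting weights are equal; your symmetry argument (global spin flip, plus independent block flip when $\alpha=0$) closes this gap cleanly. Your caution about the critical case $\alpha+\beta=2$ is well placed but slightly pessimistic: since $G$ is continuous and strictly negative on the compact set $[-1,1]^2\setminus B_\varepsilon(0)$, the concentration is still exponential there, so no separate argument is really needed.
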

Theorem \ref{theo1} is the trigger for another question: In the Curie-Weiss model the phase transition can also be observed on the level of fluctuations of the magnetization: As is shown in \cite{Ellis_Newman_78a}, \cite{Ellis_Newman_78b} or in
\cite[Theorems V.9.4 and V.9.5]{Ellis-EntropyLargeDeviationsAndStatisticalMechanics}
or \cite{EL10}, for $\beta <1$ the parameter $\sqrt N \overline{m}$ obeys a standard CLT with expectation 0 and variance $\frac 1 {1-\beta}$, while for $\beta=1$ one has to scale differently: Then $N^{1/4} \overline{m}$ converges in distribution to a random variable that has Lebesgue density proportional to $\exp(-\frac{1}{12} x^4)$. Our question in this note is, whether a similar behaviour can be observed for the block spin Ising model and how the limit distribution depends on the relation between $\alpha$ and $\beta$.
To answer this question we show 
\begin{theorem}\label{CLT_m}
For the block spin Ising model assume that $0 \le \alpha < \beta$ and that $\beta+ \alpha < 2$ . Then,
$\sqrt N m$ converges in distribution to a 2-dimensional Gaussian random variable with expectation 0 and covariance matrix
$
\Sigma=s^2 \begin{pmatrix}
1 & r\\
r & 1
\end{pmatrix}
$
with $s^2= \frac{8-4\beta}{(2-\beta)^2-\alpha^2}$ and $r=\frac{\alpha}{2-\beta}$.
\end{theorem}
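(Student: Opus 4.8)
The plan is to compute the limiting characteristic function of $\sqrt N\,m$ by a Hubbard--Stratonovich transformation followed by a Laplace (saddle point) analysis, in the spirit of Ellis and Newman~\cite{Ellis_Newman_78a}. First, rewrite the Gibbs weight as a genuine quadratic form: setting
\[
A:=\frac14\begin{pmatrix}\beta & \alpha\\ \alpha & \beta\end{pmatrix},
\]
the displayed form of the Hamiltonian shows $\mu_{N,\alpha,\beta}(\sigma)\propto\exp\bigl(\tfrac N2\,m^{T}Am\bigr)$, and $A$ is positive definite because $0\le\alpha<\beta$ (its eigenvalues are $\tfrac{\beta\pm\alpha}{4}$). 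A Gaussian integral identity then gives, for every $t\in\R^{2}$,
\[
\E\bigl[e^{\,it^{T}\sqrt N m}\bigr]
=\frac{\displaystyle\int_{\R^{2}}e^{-\frac N2\,x^{T}A^{-1}x}\Bigl(\textstyle\sum_{\sigma}e^{\,Nx^{T}m+it^{T}\sqrt N m}\Bigr)\,dx}
       {\displaystyle\int_{\R^{2}}e^{-\frac N2\,x^{T}A^{-1}x}\Bigl(\textstyle\sum_{\sigma}e^{\,Nx^{T}m}\Bigr)\,dx}.
\]
Since $m$ is a pair of block sums, the spin sum factorizes over sites, $\sum_{\sigma}e^{\,y^{T}m}=\bigl(2\cosh\tfrac{2y_{1}}{N}\bigr)^{N/2}\bigl(2\cosh\tfrac{2y_{2}}{N}\bigr)^{N/2}$; taking $y=Nx+i\sqrt N\,t$ turns the numerator into $\int_{\R^{2}}\exp\bigl(N\phi_{N}(x,t)\bigr)\,dx$ with
\[
\phi_{N}(x,t):=-\tfrac12\,x^{T}A^{-1}x+\tfrac12\sum_{j=1,2}\log\!\Bigl(2\cosh\bigl(2x_{j}+\tfrac{2it_{j}}{\sqrt N}\bigr)\Bigr),
\]
and the denominator is the same with $t=0$.

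Next I would substitute $x=v/\sqrt N$ and pass to the limit; the Jacobians cancel in the ratio. The relevant real rate function is $\phi(x):=-\tfrac12 x^{T}A^{-1}x+\tfrac12\sum_{j}\log(2\cosh 2x_{j})$, with Hessian $D^{2}\phi(x)=-A^{-1}+\operatorname{diag}(2\operatorname{sech}^{2}2x_{1},2\operatorname{sech}^{2}2x_{2})\preceq 2I-A^{-1}=:H$. In the regime $\beta+\alpha<2$ one has $H\prec0$ (the eigenvalues of $A^{-1}$ are $\tfrac{4}{\beta\pm\alpha}$, both $>2$), so $\phi$ is strictly concave on $\R^{2}$; hence it has a unique maximum at $x=0$, with $\phi(0)=\log2$, and moreover $\phi(x)\le\log2-\tfrac\lambda2|x|^{2}$ for all $x$, where $\lambda:=\lambda_{\min}(-H)>0$. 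Together with the elementary estimate $\operatorname{Re}\log\!\bigl(2\cosh(a+ib)\bigr)=\tfrac12\log\bigl(4(\cosh^{2}a-\sin^{2}b)\bigr)\le\log(2\cosh a)$, this yields $\operatorname{Re}\phi_{N}(x,t)\le\phi(x)\le\log2-\tfrac\lambda2|x|^{2}$ uniformly in $N$, which after the rescaling provides an integrable Gaussian majorant and legitimizes dominated convergence. Using $\tfrac N2\log\!\bigl(2\cosh\tfrac{2(v_{j}+it_{j})}{\sqrt N}\bigr)=\tfrac N2\log2+(v_{j}+it_{j})^{2}+o(1)$ one gets $N\phi_{N}(v/\sqrt N,t)-N\log2\to\tfrac12 v^{T}Hv+2i\,v^{T}t-|t|^{2}$ pointwise, so evaluating the resulting Gaussian integrals in numerator and denominator gives
\[
\E\bigl[e^{\,it^{T}\sqrt N m}\bigr]\ \longrightarrow\ \exp\bigl(-|t|^{2}-2\,t^{T}(-H)^{-1}t\bigr)=\exp\bigl(-\tfrac12\,t^{T}\Sigma t\bigr),\qquad \Sigma=2I+4(A^{-1}-2I)^{-1}.
\]
A short manipulation rewrites $\Sigma=2(I-2A)^{-1}$, and inverting $I-2A=\tfrac12\begin{pmatrix}2-\beta & -\alpha\\ -\alpha & 2-\beta\end{pmatrix}$ gives exactly $\Sigma=\frac{4}{(2-\beta)^{2}-\alpha^{2}}\begin{pmatrix}2-\beta & \alpha\\ \alpha & 2-\beta\end{pmatrix}$, that is $s^{2}=\frac{8-4\beta}{(2-\beta)^{2}-\alpha^{2}}$ and $r=\frac{\alpha}{2-\beta}$. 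L\'evy's continuity theorem then completes the proof.

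The one genuinely delicate point is the interchange of limit and integral in the Laplace step, and in particular controlling the complex linear term created by the $it$-shift inside $\phi_{N}$; this is precisely what the $N$-uniform quadratic upper bound $\operatorname{Re}\phi_{N}(x,t)\le\log2-\tfrac\lambda2|x|^{2}$ --- a consequence of the strict concavity of $\phi$, which is where the hypotheses $\beta+\alpha<2$ and $\alpha<\beta$ enter --- is designed to handle. Everything else is bookkeeping: the site-wise factorization of the spin sum, the second-order Taylor expansion of $\cosh$ near the origin, and the $2\times2$ linear algebra identifying $\Sigma$.
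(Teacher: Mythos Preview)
Your argument is correct and arrives at the same limiting covariance matrix. The underlying mechanism --- a Hubbard--Stratonovich linearization followed by a second-order Taylor expansion of $\log\cosh$ --- is the same as in the paper, but the packaging is genuinely different and in several respects tighter.

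The paper first rotates to the diagonal coordinates $w_{1}=\tfrac12(m_{1}+m_{2})$, $w_{2}=\tfrac12(m_{1}-m_{2})$ and proves a CLT for $\tilde w=\sqrt{N}\,w$ (Lemma~\ref{lemma_w}) by \emph{convolving} the law of $\tilde w$ with a fixed Gaussian, expanding the resulting density, and then de-convolving at the level of characteristic functions. To justify the convergence of the density integrals the paper splits $\R^{2}$ into an inner ball, an intermediate annulus, and an outer region, using a separate estimate in each (including a fixed-point argument to locate the unique minimum of the rate function $\tilde\Phi$). Theorem~\ref{CLT_m} is then read off from the linear relation between $m$ and $w$.

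You instead work with $m$ directly, write the characteristic function $\E e^{it^{T}\sqrt N m}$ as a ratio of two Hubbard--Stratonovich integrals, and exploit the global inequality $D^{2}\phi\preceq 2I-A^{-1}=H\prec0$ to obtain the uniform quadratic bound $\operatorname{Re}\phi_{N}(x,t)\le\log2-\tfrac{\lambda}{2}|x|^{2}$. This single estimate replaces the three-region analysis: after the substitution $x=v/\sqrt N$ it furnishes an integrable Gaussian majorant that makes dominated convergence immediate, and it simultaneously handles the complex shift coming from the $it$-term. The identity $\Sigma=2(I-2A)^{-1}$ then drops out with no need to diagonalize. This is a cleaner route in the strictly subcritical regime; the price is that global concavity of $\phi$ fails on the critical line $\alpha+\beta=2$, which is exactly where the paper's more laborious convolution/three-region machinery earns its keep (Theorem~\ref{CLT_m_crit}).
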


\begin{remark}
It is well known that in the standard Curie-Weiss a CLT also holds in the presence of an external field \cite{{Ellis_Newman_78b}}, i.e.~with a Hamiltonian of the form
$H_{CW}(\sigma)= \frac 1{2N}\sum_{i,j} \sigma_i \sigma_j+h \sum_i \sigma_i$ with $h>0$. We are firmly convinced that a similar result is true for our model as well. However, we did not try to prove it. Finally, one might ask, whether -- in the spirit of \cite{EL10} -- Stein's method may be applied to our situation as well. We consider this a more challenging question, because a multi-dimensional version of Stein's method would be needed. We may consider this problem in a different paper.
\end{remark}

On the other hand, if  $\beta+ \alpha = 2$ the fluctuations are no longer Gaussian
\begin{theorem}\label{CLT_m_crit}
For the block spin Ising model assume that $0 \le \alpha < \beta$ and that \mbox{$\beta+\alpha=2$}. Then,
$N^{\frac 14} m_1$ converges in distribution to a probability measure $\rho$ on $\R$. The measure $\rho$ is absolutely continuous with Lebesgue-density
$g(x) = \exp(-\frac{1}{12} x^4)/Z$, where $Z$ is a normalizing constant. The difference between $ m_1$ and $m_2$ multiplied by $\frac{\sqrt N}{2}$, i.e. $\tilde m_1 - \tilde m_2:= \frac{\sqrt N}{2} (m_1 - m_2)$, however, is asymptotically Gaussian with mean $0$ and variance $\frac 2 {2-(\beta-\alpha)}$.
\end{theorem}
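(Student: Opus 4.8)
The plan is to work directly with the explicit joint law of the unnormalised block sums $X := \sum_{i \in S}\sigma_i$ and $Y := \sum_{i \notin S}\sigma_i$, for which $m_1 = \tfrac{2}{N}X$ and $m_2 = \tfrac2N Y$. Writing $n := N/2$ and $Q(x,y) := \tfrac\alpha2 xy + \tfrac\beta4(x^2+y^2)$, so that $\mu_N(\sigma)\propto\exp\!\big(nQ(m_1,m_2)\big)$, one has for $k,l$ ranging over $\{-n,-n+2,\dots,n\}$
\[
\mu_{N}(X=k,\,Y=l) \;=\; \frac1{Z_N}\binom{n}{(n+k)/2}\binom{n}{(n+l)/2}\,\exp\!\big(nQ(k/n,\,l/n)\big).
\]
The combinatorial factors are controlled by a uniform fourth-order Stirling expansion: for $|k|,|l|\le n^{3/4}\log n$,
\[
\log\binom{n}{(n+k)/2} \;=\; n\log 2 - \tfrac12\log\tfrac{\pi n}{2} - \frac{k^2}{2n} - \frac{k^4}{12 n^3} + o(1)
\]
uniformly, which is nothing but the order-four Taylor expansion of the rate function $I(x) = \tfrac{1+x}2\log(1+x) + \tfrac{1-x}2\log(1-x) = \tfrac{x^2}2 + \tfrac{x^4}{12}+\cdots$. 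Combining this with the exact identity $nQ(k/n,l/n) = \tfrac\beta{4n}(k^2+l^2)+\tfrac\alpha{2n}kl$, the quadratic part of the total exponent equals $\tfrac1n\big[\tfrac{\alpha+\beta-2}{8}(k+l)^2 + \tfrac{\beta-\alpha-2}{8}(k-l)^2\big]$: this is exactly where $\beta+\alpha=2$ enters, for it makes the coefficient of $(k+l)^2$ vanish while leaving the coefficient of $(k-l)^2$ equal to $-\alpha/4$.

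I therefore change to the coordinates $a := k+l$ and $b := k-l$ which diagonalise the quadratic form, and rescale by $a = n^{3/4}s$, $b = n^{1/2}t$. A short computation shows that the exponent, up to $k,l$-independent constants absorbed into the normalisation, becomes
\[
-\frac{\alpha t^2}{4} - \frac{s^4}{96} - \frac{s^2 t^2}{16\sqrt n} - \frac{t^4}{96 n} + o(1) \;\longrightarrow\; -\frac{\alpha t^2}{4} - \frac{s^4}{96}
\]
uniformly on compact sets in $(s,t)$, so the normalised lattice density of $\big(n^{-3/4}(X+Y),\,n^{-1/2}(X-Y)\big)$ converges pointwise to a density proportional to $\exp\!\big(-s^4/96 - \alpha t^2/4\big)$; here I take $\alpha>0$ (for $\alpha=0$ the blocks decouple into independent critical Curie--Weiss models, both coordinates require the $N^{1/4}$-scaling, and the variance in the statement is infinite). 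To upgrade this to weak convergence one must discard the contributions of $\max(|k|,|l|)>n^{3/4}\log n$: the elementary bound $\binom{n}{(n+k)/2}\le 2^n e^{-nI(k/n)}$ together with $I(x)\ge \tfrac{x^2}2+\tfrac{x^4}{12}$ on $[-1,1]$ and $\alpha+\beta=2$ yields the clean global estimate
\[
\frac1n\log\Big(\binom{n}{(n+k)/2}\binom{n}{(n+l)/2}e^{nQ(k/n,l/n)}\Big) - 2\log 2 \;\le\; -\alpha\Big(\frac{k-l}{2n}\Big)^2 - \frac16\Big(\frac{k+l}{2n}\Big)^4 ,
\]
which bounds the lattice density by an integrable profile in the rescaled variables and makes the discarded mass stretched-exponentially small; a Scheff\'e/dominated-convergence argument then gives $\big(n^{-3/4}(X+Y),\,n^{-1/2}(X-Y)\big) \Rightarrow (S,T)$ with $S,T$ independent, $S$ of density proportional to $e^{-s^4/96}$, and $T\sim\mathcal{N}(0,2/\alpha)$.

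It remains to read off the two assertions. Since $N^{1/4}m_1 = 2^{1/4}n^{-3/4}X = \tfrac{2^{1/4}}{2}\big(n^{-3/4}(X+Y) + n^{-1/4}\,n^{-1/2}(X-Y)\big)$ and the second summand tends to $0$ in probability, $N^{1/4}m_1 \Rightarrow 2^{-3/4}S$, whose density is proportional to $\exp(-x^4/12)$; likewise $\tfrac{\sqrt N}{2}(m_1-m_2) = \tfrac1{\sqrt{2}}\,n^{-1/2}(X-Y) \Rightarrow \tfrac1{\sqrt{2}}T \sim \mathcal{N}(0,1/\alpha)$, and $1/\alpha = 2/(2-(\beta-\alpha))$ since $\beta = 2-\alpha$ (en passant, the two quantities are asymptotically independent, which the statement does not claim). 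The only genuine work is the uniform fourth-order Stirling estimate over the window $|k|,|l|\lesssim n^{3/4}\log n$ and the bookkeeping that turns pointwise convergence of the density on the parity-restricted lattice into weak convergence of the rescaled variables; everything else is routine. An equivalent route, perhaps more robust, is to linearise $nQ$ by a Hubbard--Stratonovich transformation and apply the Laplace method to the resulting planar integral near its unique critical point, whose Hessian degenerates precisely along the direction $(1,1)$ when $\beta+\alpha=2$.
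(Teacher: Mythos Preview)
Your proof is correct and follows a genuinely different route from the paper's. The paper proceeds by a Hubbard--Stratonovich transform: it convolves the law of $(\hat w_1,\hat w_2)=(N^{1/4}w_1,\sqrt N\,w_2)$ with a two-dimensional Gaussian whose first variance shrinks like $N^{-1/2}$, expands $\log\cosh$ to fourth order, controls the resulting planar integral over inner, intermediate and outer regions, and then deconvolves. You instead work directly with the exact lattice pmf of $(X,Y)$ via a fourth-order Stirling expansion of the binomials, pass to the diagonalising coordinates $(k+l,k-l)$, and run a local-limit/Scheff\'e argument with an explicit integrable majorant. Both approaches hinge on the same linear change of variables and on the degeneracy of the quadratic form along the $(1,1)$ direction at $\alpha+\beta=2$; the difference is purely in how the spin sum is carried out. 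Your combinatorial route is more elementary---no convolution trick is needed, and the global inequality $-I(x)-I(y)+Q(x,y)\le -\tfrac{\alpha}{4}(x-y)^2-\tfrac{1}{96}(x+y)^4$ on $[-1,1]^2$ disposes of the tails in one stroke---whereas the paper's route is more standard in statistical mechanics and generalises more readily beyond the product-binomial structure (unequal block sizes, more blocks, etc.). Your remark that one must take $\alpha>0$ is sharp: at $\alpha=0$ the second coordinate also fluctuates on the $N^{1/4}$ scale and the stated Gaussian variance is infinite; the paper's own proof carries the same implicit restriction, since its limiting $y$-integrand then fails to be integrable.
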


We will show Theorems \ref{CLT_m} and \ref{CLT_m_crit} in Section 3 using a Hubbard-Stratonovich transformation for an appropriate function of $m$. Before, in Section 2, however, we will give an alternative proof of Theorem \ref{theo1} using the theory of large deviations. This is not only interesting in its own right, but also provides a way to derive limit theorems in more complicated settings, see e.g \cite{loeweschubertvermet}.

\section{An LDP for the vector of block magnetizations}
Differing from the line of arguments in \cite{BRS17_blockmodel}, Theorem \ref{theo1} can also be shown by proving a large deviation principle (LDP) for $0\le \alpha\leq \beta$. To this end, we will slightly change our variables and consider the vector $v=(v_1, v_2)$ with
$v_1:= \frac{1}{2} m_1$ and $v_2:= \frac{1}{2} m_2$. We show
\begin{theorem}\label{LDP}
For every $S\subset \{1, \ldots, N\}$  with $|S|=\frac{N}{2}$ the vector $v$ obeys a principle of large deviations (LDP) under the Gibbs measure $\mu_{N,\alpha,\beta}:=\mu_{N,\alpha,\beta,S}$, with speed $N$ and rate function
%\[
$J_v(x):= \sup_{y \in \R^2} [F_v(y)-J(y)]-[F_v(x)-J(x)].$
%\]
Here $F_v:\R^2 \to \R$ is defined by
\begin{equation}\label{contraction}
F_v(x) := \frac 12 \left(\beta x_1^2+ \beta  x_2^2+ 2\alpha  x_1  x_2\right)
\end{equation}
and for $x \in \R^2$
\[
J(x):= \sup_{t \in \mathbb R^2} \left[ \langle t,x \rangle - \frac{1}{2} \log \cosh(t_1)- \frac{1}{2} \log \cosh(t_2)\right].
\]
This implies that the convergence in Theorem \ref{theo1} (for $0\le\alpha\le \beta$) is exponentially fast.
\end{theorem}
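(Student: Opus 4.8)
The plan is to obtain the large deviation principle for $v=(v_1,v_2)=(m_1/2,m_2/2)$ via a tilted version of Cram\'er's theorem combined with Varadhan's lemma (the standard Ellis--Newman route to Curie--Weiss LDPs). First I would note that, writing $v_1 = \frac1N\sum_{i\in S}\sigma_i$ and $v_2=\frac1N\sum_{i\notin S}\sigma_i$, under the \emph{product} Bernoulli measure $\nu_N$ assigning each $\sigma_i$ equal mass $\frac12$ on $\{\pm1\}$, the vector $v$ is an average of i.i.d.\ bounded $\R^2$-valued contributions; more precisely $v = \frac1N\bigl(\sum_{i\in S}\sigma_i e_1 + \sum_{i\notin S}\sigma_i e_2\bigr)$, and since $|S|=|S^c|=N/2$, Cram\'er's theorem gives that $v$ satisfies an LDP under $\nu_N$ with speed $N$ and rate function exactly $J$ as defined in the statement: the two blocks contribute independently, each with the Legendre transform of $\frac12\log\cosh$, so the logarithmic moment generating function is $\frac12\log\cosh(t_1)+\frac12\log\cosh(t_2)$ and its Legendre transform is $J(x)$. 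Crucially, this step is uniform in the choice of $S$ (only $|S|=N/2$ matters), which yields the ``for every $S$'' claim.

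Next I would rewrite the Gibbs measure as an exponential tilt of $\nu_N$ by a continuous functional of $v$. Using the identity $H_{N,\alpha,\beta,S}(\sigma) = -\frac N2\bigl(\frac12\alpha m_1 m_2+\frac\beta4 m_1^2+\frac\beta4 m_2^2\bigr)$ and substituting $m_i=2v_i$, one gets $-H_{N,\alpha,\beta,S}(\sigma) = N\bigl(\frac12(\beta v_1^2+\beta v_2^2+2\alpha v_1 v_2)\bigr) = N F_v(v)$ with $F_v$ as in \eqref{contraction}. Hence $\mu_{N,\alpha,\beta}(d\sigma) = Z_{N,\alpha,\beta}^{-1} e^{N F_v(v(\sigma))}\,\nu_N(d\sigma)$ up to the harmless constant factor $2^N$ coming from $\nu_N$ versus counting measure, which cancels in the normalization. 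Since $F_v$ is continuous (indeed a quadratic polynomial) and $v$ is confined to the compact set $[-1,1]^2$, Varadhan's lemma applies directly: the pushforward of $\mu_{N,\alpha,\beta}$ under $v$ satisfies an LDP with speed $N$ and rate function $J_v(x) = \sup_{y\in\R^2}[F_v(y)-J(y)] - [F_v(x)-J(x)]$, which is precisely the asserted rate function. The final sentence of the theorem, that the convergence in Theorem~\ref{theo1} is exponentially fast, then follows because the zero set of $J_v$ is exactly the set of global maximizers of $F_v-J$, and a routine critical-point analysis of $F_v(x)-J(x)$ (its gradient condition reads $x_1=\tanh(\beta x_1+\alpha x_2)$, $x_2=\tanh(\alpha x_1+\beta x_2)$, whose solutions in the symmetric case reduce to the Curie--Weiss equation at parameter $\frac{\alpha+\beta}2$) identifies these maximizers with the atoms appearing in Theorem~\ref{theo1}, after translating back from $v$ to $m=2v$; an LDP with a rate function vanishing only on that finite set gives exponential concentration there.

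The main obstacle I anticipate is not any single step but the bookkeeping around two points: first, verifying that $F_v-J$ indeed attains its supremum only at the claimed points and ruling out spurious asymmetric solutions of the fixed-point system (this is a convexity/monotonicity argument on the map $x\mapsto(\tanh(\beta x_1+\alpha x_2),\tanh(\alpha x_1+\beta x_2))$, using $0\le\alpha<\beta$ and distinguishing the subcritical case $\beta+\alpha\le 2$ from the supercritical one); and second, making sure the identification of $J$ as the Legendre transform is carried out correctly, in particular that $J$ is a good rate function (lower semicontinuous with compact sublevel sets), which here is automatic since $v$ lives in a compact set so $J=+\infty$ outside $[-1,1]^2$. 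The passage from the LDP for $v$ to Theorem~\ref{theo1} for $m$ is then just the contraction principle under the bijection $m=2v$, which rescales the rate function but preserves its zero set up to the factor $2$. I would present the LDP proof cleanly via Cram\'er plus Varadhan and relegate the critical-point analysis of $F_v-J$ to a short lemma.
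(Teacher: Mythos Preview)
Your proposal is correct and follows essentially the same route as the paper: an LDP for $v$ under the product measure via the logarithmic moment generating function (the paper invokes G\"artner--Ellis rather than Cram\'er, a cosmetic difference here), then Varadhan's lemma to tilt to the Gibbs measure, and finally a fixed-point analysis of the maximizers of $F_v-J$ (the paper carries this out in the $m$-variables via Banach's fixed-point theorem applied to the inverse map, close in spirit to your proposed monotonicity/contraction argument). One minor slip to watch: in the $v$-variables the gradient condition reads $2x_i=\tanh(\beta x_i+\alpha x_{3-i})$, equivalently $m_i=\tanh(\tfrac{\beta}{2}m_i+\tfrac{\alpha}{2}m_{3-i})$ in the $m$-variables, not $x_i=\tanh(\beta x_i+\alpha x_{3-i})$ as you wrote---precisely the factor-of-two bookkeeping you anticipated.
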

\begin{proof}
We will prove this theorem in two steps, first we show the LDP, then, how one derives Theorem \ref{theo1} from it.

\noindent
\underline{Step 1:}

\noindent
 First, note that the case $\alpha=0$ is trivial. Then, the system consists of two independent Curie-Weiss models on $\frac N2$ spins at temperature $\beta$. The LDP for the magnetization in each of the systems is known (cf.~e.g.~\cite{Ellis-EntropyLargeDeviationsAndStatisticalMechanics}) and transferring these LDPs to the vector $v$ (with independent components) is trivial.
We will thus assume that $\alpha>0$.

Let us consider the moment generating function of the vector $v$. To this end let $t=(t_1, t_2)\in \R^2$. Then
the moment generating function of $v$ in $t$ is given by
\[
\E \exp(N \langle t, v\rangle)= \cosh(t_1)^{\frac{N}{2}} \cosh(t_2)^{ \frac{N}{2}},
\]
where here $\mathbb E$ denotes the expectation with respect to the  a priori
measure  $\frac{1}{2}(\delta_{-1}-\delta_{+1})$.
This readily yields
$\lim_{N \to \infty} \frac 1N \log \E \exp(N \langle t, v\rangle)= \frac{1}{2} \log \cosh(t_1)+ \frac{1}{2} \log \cosh(t_2).$
As the right hand side of this expression is finite and differentiable on all of $\R^2$,
by the G\"artner-Ellis Theorem \cite[Theorem 2.3.6]{dembozeitouni} this computation implies an LDP for $v$ under the uniform distribution with speed $N$ and rate function
\[
J(x):= \sup_{t \in \R^2} \left[ \langle t,x \rangle - \frac{1}{2} \log \cosh(t_1)- \frac{1}{2} \log \cosh(t_2)\right]= \frac{1}{2} I\left(2x_1\right)+ \frac{1}{2} I\left(2 x_2\right)
\]
for $x \in \R^2$.
Here
$
I(x):= \frac 12 (1+x) \log (1+x) + \frac 12 (1-x) \log (1-x).
$
Now the Hamiltonian $H_{N,\alpha, \beta, S}(\sigma)$  of our model can also be rewritten in terms of $v$:
\[
H_{N,\alpha, \beta, S}(\sigma)= -\frac N2 \left(\beta v_1^2(\sigma)+ \beta  v_2^2(\sigma)+ 2\alpha  v_1(\sigma)  v_2(\sigma)\right).
\]
This fact, together with the above LDP and the exponential form of the Gibbs measure and the LDP for integrals of exponential functions (see e.g. \cite[Theorem III.17]{den_hollander_large_deviations} -- a direct consequence of Varadhan's Lemma \cite[Theorem 4.3.1]{dembozeitouni} -- implies that the distribution of $v$ under $\mu_{N,\alpha,\beta,S}$ satisfies an LDP with speed $N$ and rate function
\[
J_v(x):= \sup_{y \in \R^2} [F_v(y)-J(y)]-[F_v(x)-J(x)],
\]
where $F_v:\R^2 \to \R$ is given by \eqref{contraction}. A change of variables yields the desired LDP.

\noindent
\underline{Step 2:}

\noindent
If $M$ denotes the set of minima of $J_v$ and $B_\vep(M):= \bigcup_{y \in M} B_\vep(y)$ (where $B_\vep(y)$ are open balls of radius $\vep>0$ centered around $y$) we obtain from the upper bound of the LDP that
\[
\P(v \notin B_\vep(M)) \le \exp\left(-\frac{N}2 \inf_{x \in B_\vep^c(M)} J_v(x)\right)
\]
for $N$ large enough. The $\inf$ on the right hand side of the inequality is positive. In this sense,
$v$ concentrates in the minima of $J_v$ exponentially fast. By a change of variables, again, this implies that $m$ concentrates exponentially fast in the (global) minima of $J_m$ defined by
\[
J_m(x):= \sup_{y \in \R^2} [F_m(y)-\tilde J(y)]-[F_m(x)-\tilde J(x)],
\]
\[
\tilde J(x):= \frac{1}{2} I(x_1)+ \frac{1}{2} I(x_2) \quad
\text{ and } \quad
F_m(x) := \frac 12 \left(\beta \frac{1}{4} x_1^2+ \beta \frac{1}{4} x_2^2+ \frac{1}{2}\alpha  x_1  x_2\right).
\]
The minima of $J_m$ are the maxima of $F_m(x)-\tilde{J}(x)$. These
satisfy $\nabla (F_m-\tilde{J})(x)=0,$ i.e.
\begin{equation}\label{critical_point1}
\frac{1}{2} \beta x_1 + \frac{1}{2}\alpha x_2 = \artanh(x_1) \qquad \mbox{and}\qquad
\frac{1}{2} \beta x_2 + \frac{1}{2} \alpha x_1 = \artanh(x_2).
\end{equation}
Note that the vector $(0,0)$ is always a solution to this system of equations and hence a critical point of  $F_m-\tilde{J}$.

We start with the high temperature regime, i.e.~we consider $\beta +\alpha < 2$.
By an easy calculation we find that the Hesse matrix of $F_m(x,y)-\tilde{J}(x,y)$ is given by
 \[\frac{1}{2}
 \begin{pmatrix}
 \frac{1}{2} \beta - \frac 1 {1-x^2} & \frac{1}{2}\alpha\\
 \frac{1}{2}\alpha &  \frac{1}{2} \beta - \frac 1 {1-y^2}
 \end{pmatrix}.
 \]
Hence, the Hesse matrix in the point $(0,0)$ is negative definite, i.e. $(0,0)$ is a local maximum of $F_m(x)-\tilde{J}(x)$, if  $0 \le \alpha  \le \beta <2$ and
$
\left( 1-\frac{1}{2} \beta \right)^2 - \frac{1}{4} \alpha^2>0.
$
This is true, if $\alpha + \beta < 2$.

Next, we will see that, in this case, the point $(0,0)$ is the only solution to the system of equations in  \eqref{critical_point1}, and hence the global maximum of  $F_m(x)-\tilde{J}(x)$.
To this end, we rewrite the equations in \eqref{critical_point1}  as
\begin{equation*}
x_2= \frac{2}{\alpha} \left( \artanh (x_1) - \frac{1}{2} \beta x_1 \right) \quad \text{and} \quad
x_1= \frac{2}{\alpha} \left( \artanh (x_2) - \frac{1}{2} \beta x_2 \right).
\end{equation*}
Hence, for $|x|< 1$ and
$f(x):= \frac{2}{\alpha}\left( \artanh (x) - \frac{1}{2} \beta x \right) $ we have
\begin{equation*}
x_1=f(x_2), \quad x_2=f(x_1) \quad \text{resp.} \quad x_1=f^2(x_1), \quad x_2=f^2(x_2).
\end{equation*}
This means we are looking for the fixed points of $f^2$.
We note that for $0\leq \beta \leq 2$, we have for all $|x|<1$
\begin{equation*}
f'(x)=\frac{2}{\alpha} \left( \frac{1}{1-x^2} - \frac{1}{2} \beta \right) >0
\end{equation*}
and hence $f$ is invertible. The fixed points of $f^2$ are thus the same as the fixed points of $(f^{-1})^2$ resp.~of $(f^{-1}).$
We see that $f^{-1}$ is a strong contraction for $\alpha + \beta < 2$ from $(f^{-1})'=\frac{1}{f'}$ and for all $y \in (-1,1)$
\begin{equation*}
\frac{1}{f'(y)}=\frac{\alpha}{2}\frac{1}{ \left( \frac{1}{1-y^2} - \frac{1}{2} \beta \right)} \leq \frac{\alpha}{2}\frac{1}{ \left( 1 - \frac{1}{2} \beta \right)} \le 1-\vep,
\end{equation*}
for $\vep>0$ small enough.
Thus, by Banach's fixed point theorem, there is a single fixed point, which has to be equal to zero.

Next consider the critical line $\beta + \alpha =2$. Here the arguments are almost the same. The only difference is, that now $\frac{1}{f'(y)}\big|_{y=0}=1$, while $\frac{1}{f'(y)}<1$ for all other $y$. Hence $f^{-1}$ is a weak contraction for $\alpha + \beta = 2$. However, the magnetizations $m_1$ and $m_2$ live on the compact interval $[-1,1]$, such that we can again conclude that $f^{-1}$ has the unique fixed point $0$.

Now we consider $\alpha + \beta >2$. In this case $(0,0)$ is still a solution to \eqref{critical_point1}. However, in this case, it is either a saddle point or a local minimum of $F_m-\tilde{J}$, because it is not a maximum. Indeed, choose $x=y$, i.e.
\begin{equation*}
F_m(x,x) -\tilde J(x,x)= \frac 12 \left( \frac{\beta + \alpha}{2}     \right)x^2 -I(x).
\end{equation*}
From the one dimensional Curie-Weiss model we know that for $\alpha + \beta >2$ the maximum at attained away from zero. Hence, $(0,0)$ is not a maximum of $F_m-\tilde{J}$.
Recalling that $\alpha >0$, we see directly from the definition of $F_m$ and $\tilde{J}$ that a point $(x,y)$ can only be a maximum, if $x$ and $y$ have the same sign.
We will see that $f$ and thus $f^2$ has exactly one positive fixed point $m^*$ and one negative fixed point $-m^*$.
This again is shown using a fixed point argument for $f^{-1}$. Note that now $\frac{1}{f'(0)}>1$. However, the function $y \mapsto \frac{1}{f'(y)}$ is always non-negative on $[0,1]$, it is decreasing, and depends continuously  on $y$ and by the intermediate value theorem there is $y_0$ such that $\frac{1}{f'(y)}\le 1$ for all $y \in [y_0,1]$.
 Thus, $f^{-1}$ restricted to $[y_0,1]$ is a weakly contracting self-map and therefore has a unique fixed point. But this fixed point of $f^{-1}$ is the fixed point of $f$ and is easily checked to satisfy
\begin{equation*}
\tanh \left( \frac{\alpha + \beta}{2} m^*\right)=m^*.
\end{equation*}
This proves the claim.
\end{proof}

\begin{remark}
In the spirit of \cite{werner_curie_weiss}, one can also allow for other sizes of $S$, i.e. for $0 < \gamma <1$  we can consider sets $S \subset \{1,\ldots, N\}$ with $|S|= \gamma N$. Here, we assume, for simplicity, $\gamma N$ to be an integer. In this case, the Hamiltonian $H_{N,\alpha, \beta, S}$ is the same as in \eqref{Hamiltonian} and the magnetizations are
%\[
$m_1(\sigma)= \frac {1} {\gamma N} \sum_{i \in S} \sigma_i,  \mbox{  and  }  m_2(\sigma):= \frac {1} {(1-\gamma) N} \sum_{i \notin S} \sigma_i$.
%\]
The Hamiltonian can then be rewritten as
\[
H_{N,\alpha, \beta, S}(\sigma)= -\frac N 2 \left(2 \gamma (1-\gamma) \alpha m_1 m_2 +\beta \gamma^2 m_1^2+ (1-\gamma)^2 \beta m_2^2\right).
\]
We believe that a result analogue to Theorem \ref{theo1} can be shown by generalizing the large deviation techniques in the proof of Theorem \ref{LDP}. In the same spirit as in Theorem \ref{CLT_m} and Theorem \ref{CLT_m_crit}, one can also show a Central Limit Theorem for this generalized setting. The technical problems are, however, more demanding. We will return to these questions in a later publication.
\end{remark}
\section{Proof of Theorem \ref{CLT_m} and \ref{CLT_m_crit}}
The proofs of Theorems \ref{CLT_m} and \ref{CLT_m_crit} rely on the same idea. We will first prove limit theorems for two other parameters, that are closely related to $m_1$ and $m_2$. To this end we introduce the random variables
\[
w_1:= w_1(\sigma):=\frac 1 N \sum_{i=1}^N \sigma_i \qquad \mbox{and} \quad w_2:= w_2(\sigma):= \frac 1 N\left(\sum_{i \in S} \sigma_i -\sum_{i \notin S} \sigma_i\right)
\]
and the corresponding standardized versions
\[
\tilde w_1:= \sqrt N w_1 \qquad \mbox{and } \quad \tilde w_2:= \sqrt N w_2.
\]
Note that $m_1 = w_1+w_2$ and $m_2 = w_1-w_2$ and thus limit theorems for $w=(w_1,w_2)$ will imply limit theorems for $m$ and vice versa.

Again, note that the Hamiltonian $H_{N,\alpha, \beta, S}$ can also be rewritten in terms of the variables $w_1$ and $w_2$ resp.~in terms of $\tilde{w}_1$ and $\tilde{w}_2$ as
\begin{equation*}
H_{N,\alpha, \beta, S}(\sigma)= - \frac{N}{2} \left(2 \alpha \frac14 m_1 m_2 + \beta \frac14 m_1^2 + \beta \frac14 m_2^2\right)
=-\frac{1}{4} ((\alpha + \beta) \tilde{w_1}^2 + (\beta-\alpha) \tilde{w_2}^2).
\end{equation*}
Next we will show a Central Limit Theorem for the vector $\tilde w:=(\tilde w_1, \tilde w_2)$ in the high temperature region $0 \le \alpha < \beta < 2$ and
$\alpha+\beta <2$.
\begin{lemma}\label{lemma_w}
Assume that $0 \le \alpha < \beta$ and $\beta +\alpha < 2$. Then, as $N \to \infty$, under the Gibbs measure $\mu_{\alpha, \beta, S}$ the vector $\tilde w$ converges to a 2-dimensional Gaussian distribution with expectation $0$ and covariance matrix
$
\Sigma=\begin{pmatrix}\frac{1}{1-\frac{\alpha+\beta}{2}} &0 \\ 0&  \frac{1}{1-\frac{ \beta - \alpha}{2}}\end{pmatrix}.
$
\end{lemma}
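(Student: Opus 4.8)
The plan is to identify the limiting characteristic function of $\tilde w=(\tilde w_1,\tilde w_2)$ under $\mu_{N,\alpha,\beta,S}$ by a two-dimensional Hubbard--Stratonovich transformation, in the spirit of the classical Ellis--Newman treatment of the Curie--Weiss model. Put $a_1:=\tfrac{\alpha+\beta}2$ and $a_2:=\tfrac{\beta-\alpha}2$; under the standing hypotheses $0\le\alpha<\beta$ and $\alpha+\beta<2$ one has $0<a_1<1$ and $0<a_2<1$, which will be decisive. By the rewriting of the Hamiltonian recorded just above the lemma, $e^{-H_{N,\alpha,\beta,S}}=\exp\!\bigl(\tfrac{a_1}2\tilde w_1^2+\tfrac{a_2}2\tilde w_2^2\bigr)$, so that, writing $\E_0$ for expectation under the uniform measure on $\{-1,+1\}^N$,
\[
\E_{\mu_{N,\alpha,\beta,S}}\bigl[e^{i\langle t,\tilde w\rangle}\bigr]
=\frac{\E_0\bigl[e^{i\langle t,\tilde w\rangle}\,e^{\frac{a_1}2\tilde w_1^2+\frac{a_2}2\tilde w_2^2}\bigr]}
       {\E_0\bigl[e^{\frac{a_1}2\tilde w_1^2+\frac{a_2}2\tilde w_2^2}\bigr]},\qquad t=(t_1,t_2)\in\R^2 .
\]

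First I would linearize the quadratic exponents: with $Z_1,Z_2$ independent standard Gaussians, independent of $\sigma$, the identity $e^{\frac a2y^2}=\E[e^{\sqrt a\,yZ}]$ gives $e^{\frac{a_1}2\tilde w_1^2+\frac{a_2}2\tilde w_2^2}=\E_Z\bigl[e^{\sqrt{a_1}\tilde w_1Z_1+\sqrt{a_2}\tilde w_2Z_2}\bigr]$. Since $\tilde w_1=\tfrac1{\sqrt N}\sum_i\sigma_i$ and $\tilde w_2=\tfrac1{\sqrt N}\sum_i\epsilon_i\sigma_i$ (with $\epsilon_i=+1$ on $S$, $\epsilon_i=-1$ on $S^c$) are \emph{linear} in $\sigma$, Fubini and the independence of the spins under $\E_0$ let the $\sigma$-average factorize over sites; using $|S|=N/2$ one obtains for the numerator
\[
\E_Z\Bigl[\cosh\!\Bigl(\tfrac{u+v}{\sqrt N}\Bigr)^{\!N/2}\cosh\!\Bigl(\tfrac{u-v}{\sqrt N}\Bigr)^{\!N/2}\Bigr],\qquad
u:=it_1+\sqrt{a_1}Z_1,\ \ v:=it_2+\sqrt{a_2}Z_2 ,
\]
and the same expression with $t=0$ for the denominator.

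It then remains to let $N\to\infty$ inside $\E_Z$. Pointwise in $Z$ one has $\cosh(c/\sqrt N)^{N/2}\to e^{c^2/4}$ for every $c\in\C$, so the integrand converges to $e^{(u+v)^2/4+(u-v)^2/4}=e^{(u^2+v^2)/2}$. To interchange limit and integral I would dominate, using $|\cosh z|\le\cosh(\operatorname{Re}z)$ together with $\log\cosh x\le x^2/2$, the $N$-th integrand by $\exp\!\bigl(\tfrac{a_1}2Z_1^2+\tfrac{a_2}2Z_2^2\bigr)$, which is integrable against the standard Gaussian law \emph{exactly} because $a_1<1$ and $a_2<1$. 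Finally one evaluates the Gaussian integral $\E_Z\bigl[e^{(u^2+v^2)/2}\bigr]$, which splits into two one-dimensional integrals of the form $\E[e^{ibZ+\frac a2Z^2}]=(1-a)^{-1/2}\exp(-\tfrac{b^2}{2(1-a)})$; the prefactors cancel between numerator and denominator, leaving
\[
\lim_{N\to\infty}\E_{\mu_{N,\alpha,\beta,S}}\bigl[e^{i\langle t,\tilde w\rangle}\bigr]
=\exp\!\Bigl(-\frac{t_1^2}{2(1-a_1)}-\frac{t_2^2}{2(1-a_2)}\Bigr),
\]
the characteristic function of the Gaussian law with mean $0$ and covariance $\mathrm{diag}\!\bigl(\tfrac1{1-a_1},\tfrac1{1-a_2}\bigr)=\Sigma$. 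L\'evy's continuity theorem then concludes, and limit theorems for $m$ follow via $m_1=w_1+w_2$, $m_2=w_1-w_2$.

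The $\cosh$-asymptotics and the Gaussian integrals are routine; the only genuinely delicate step is the domination that legitimizes exchanging limit and $\E_Z$, and this is exactly where the high-temperature assumption $\alpha+\beta<2$ enters: if $a_1\ge1$ the auxiliary Gaussian integral diverges, the analytic shadow of the breakdown of the standard CLT on the critical line $\alpha+\beta=2$ treated in Theorem~\ref{CLT_m_crit}. I also note that the site-factorization in the second step uses $|S|=N/2$ essentially; for unequal block sizes the two $\cosh$-factors would carry different exponents. A variant avoiding complex arguments of $\cosh$ would be to convolve the law of $\tilde w$ with a fixed centered Gaussian, compute the (real) density of the convolution by the same Hubbard--Stratonovich identity and dominated convergence, and deconvolve at the level of characteristic functions; the direct route above is, however, shorter.
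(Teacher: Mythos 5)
Your proof is correct, and the calculations check out. Structurally it rests on the same Gaussian linearization (Hubbard--Stratonovich) of the quadratic Gibbs weight as the paper's argument, and after factorizing the spin sum both proofs arrive at an integral of $\cosh(\cdot)^{N/2}$-products. Where you genuinely diverge from the paper is in the analytic apparatus built on top of that identity. You linearize $e^{\frac{a_1}{2}\tilde w_1^2+\frac{a_2}{2}\tilde w_2^2}$ by an auxiliary pair $(Z_1,Z_2)$ inside a direct computation of the characteristic function $\E_{\mu}[e^{i\langle t,\tilde w\rangle}]$, and then pass to the limit under $\E_Z$ by a single dominated convergence step, with the dominating function $\exp\bigl(\tfrac{a_1}{2}Z_1^2+\tfrac{a_2}{2}Z_2^2\bigr)$ obtained from $|\cosh z|\le\cosh(\operatorname{Re}z)$ and $\log\cosh x\le x^2/2$; integrability uses precisely $a_1,a_2<1$. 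The paper instead convolves the \emph{law} of $\tilde w$ with a fixed $\ND(0,C)$, computes the resulting \emph{density} $e^{-\Phi_N}$ on $\R^2$, and proves convergence of that density by splitting $\R^2$ into an inner ball $B(0,R)$, an intermediate annulus up to radius $r\sqrt N$, and an outer region $B(0,r\sqrt N)^c$, each handled with separate $\log\cosh$ estimates; Gaussianity of $\tilde w$ is then recovered at the end by dividing characteristic functions. Your route is shorter and replaces the three-region truncation with one clean domination; the paper's density route is bulkier but transfers more mechanically to the critical line $\alpha+\beta=2$, where, as you correctly observe, your dominating function ceases to be integrable and the scaling must change, while the paper only needs to modify the covariance of the smoothing Gaussian and the Taylor order in each region. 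Two small points worth making explicit in a write-up: the interchange of $\E_Z$ and the finite $\sigma$-sum is a trivial Fubini, justified since $|\tilde w_k|\le\sqrt N$ gives a Gaussian-integrable bound; and the pointwise limit $\cosh(c/\sqrt N)^{N/2}\to e^{c^2/4}$ should be read with $N/2$ an integer so that $\cosh^{N/2}$ is entire and no branch issue arises for complex $c$.
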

\begin{proof}
Our principal strategy consists of computing a suitable Hubbard-Stratonovich transform of our measure of interest (as e.g. in \cite{gentzloewe}) and expanding it. To this end, let ${\mathcal N}(0, C)$ denote a two-dimensional Gaussian distribution with expectation $0$ and covariance matrix $C$ given by
$
C=  \begin{pmatrix}
\frac{2}{\beta+ \alpha} & 0\\
0 & \frac{2}{\beta-\alpha}
\end{pmatrix}.
$
We will now compute the density of
$\chi_{N,\alpha,\beta} :=
\mu_{N,\alpha,\beta}(\tilde{w})^{-1} * {\mathcal N}(0, C)$:
Let $A$ be a Borel subset of ${\mathbb R}^2$ and let $\varrho_{N,\beta}(\sigma)$ denote the density of $ \mu_{N,\alpha, \beta}$. Then
\begin{align*}
&  \chi_{N,\alpha, \beta} (A) = (\tilde{w})^{-1}
      * {\mathcal N}(0, C) (A)
 =
\sum_{\sigma \in \{-1,1\}^N}
      {\mathcal N}(0,C) (A-\tilde{w})
      \mu_{N,\alpha,\beta}(\sigma)
 \\
 = & K_1
\sum_{\sigma \in \{-1,1\}^N}
\int_{A-\tilde{w} }    \exp \left( -\frac{1}{2}\left( \frac{\alpha + \beta}{2}x^2  +  \frac{\beta - \alpha}{2} y^2 \right)\right) \varrho_{N,\beta}(\sigma) dx dy
\\
 = & K_1
\sum_{\sigma \in \{-1,1\}^N}
\int_{A}    \exp \left( -\frac{1}{2}\left( \frac{\alpha + \beta}{2}(x- \tilde{w_1})^2  +  \frac{\beta - \alpha}{2} (y-\tilde{w_2})^2\right)\right) \varrho_{N,\beta}(\sigma) dx dy
\\
 = & K_2
\sum_{\sigma \in \{-1,1\}^N}
\int_{A}    \exp \left( -\frac{1}{2}\left( \frac{\alpha + \beta}{2}(x- \tilde{w_1})^2  +  \frac{\beta - \alpha}{2} (y-\tilde{w_2})^2\right)\right)\\
&\hskip 3cm \times \exp\left(\frac{1}{4} \left((\alpha + \beta) \tilde{w_1}^2 + (\beta-\alpha) \tilde{w_2}^2\right)\right) dx dy
\\
 = & K_2
\sum_{\sigma \in \{-1,1\}^N}
\int_{A}    \exp \left( -\frac{\alpha + \beta}{4}  x^2 -   \frac{  \beta - \alpha }{4} y^2 \right)
\exp\left(
\frac{\alpha + \beta}{2}  x \tilde{w_1} + \frac{\beta  - \alpha }{2}  y \tilde{w_2}\right)dx dy
\\
 = & K_3
\int_{A}    \exp\left( -\frac{\alpha + \beta}{4}  x^2 -   \frac{   \beta - \alpha}{4} y^2 \right)\exp\left( \frac N2 \log \cosh \left(\frac{1}{\sqrt{N}} \left(\frac{\alpha + \beta}{2} x + \frac{\beta- \alpha}{2} y\right)\right)\right.\\
 & \qquad \qquad +  \frac N2 \log \cosh \left(\frac{1}{\sqrt{N}} \left( \frac{\alpha + \beta}{2} x - \frac{\beta-\alpha}{2} y\right)\right)dx dy
\end{align*}
Here, we used
$
K_1:= \frac 1 {2 \pi \sqrt{\mathrm{det}\,C}},   K_2 := \frac 1 {{2 \pi \sqrt{\mathrm{det}\,C} Z_{N,\alpha,\beta}}}
  \mbox{and } \quad  K_3:=K_2 2^N.
$
Denote by
\begin{align*}
\Phi(x,y):=&\Phi_{N,\alpha,\beta,S}(x,y)
\\
:= & \frac{1}{4} (\alpha + \beta) x^2 +   \frac{1}{4} (  \beta - \alpha) y^2-
 \frac N2 \log \cosh \left(\frac{1}{\sqrt{N}} \left(\frac{\alpha + \beta}{2} x + \frac{\beta- \alpha}{2} y\right) \right)
 \\
 &
\quad \quad - \frac N2 \log \cosh \left(\frac{1}{\sqrt{N}}\left( \frac{\alpha + \beta}{2} x - \frac{\beta-\alpha}{2} y\right)\right).
\end{align*}
Now recall the second order Taylor expansion
$\log \cosh (x) = \frac{1}{2}x^2+ \mathcal{O}(x^4)$.
Thus
\begin{align*}
\Phi(x,y)=&\frac{1}{4} (\alpha + \beta) x^2 +   \frac{1}{4} (  \beta - \alpha) y^2
- \frac{1}{4}\left(  \frac{(\alpha + \beta)^2}{4} x^2 +  \frac{\beta^2- \alpha^2}{2}xy  + \frac{(\beta-\alpha)^2}{4}y^2\right)
\\
& \hskip +2cm - \frac{1}{4} \left(  \frac{(\alpha + \beta)^2}{4} x^2 -  \frac{\beta^2- \alpha^2}{2}xy  + \frac{(\beta-\alpha)^2}{4}y^2\right)+\mathcal{O}(N^{-1}),
\\
=& \frac{x^2}{2} \left( \frac{\alpha + \beta}{2} - \left( \frac{\alpha + \beta}{2}\right)^2\right)+
\frac{y^2}{2} \left( \frac{\beta-\alpha }{2} - \left( \frac{  \beta-\alpha }{2}\right)^2\right) +\mathcal{O}(N^{-1}),
\end{align*}
where the constant in the $\mathcal{O}(N^{-1})$-term depends on $x$ and $y$. However, the convergence is uniform on compact subsets of $\R^2$.
Thus
\begin{align*}
&\chi_{N,\alpha, \beta} (A)
= K_3\int_A  e^{-\Phi(x,y)} dx\,  dy\\
 =& K_3 \int_A\exp  \left[ -\frac{x^2}{2} \left(  \frac{\alpha + \beta}{2} -\left( \frac{\alpha + \beta}{2}\right)^2 \right)-   \frac{y^2}{2} \left( \frac{  \beta - \alpha}{2} -\left( \frac{  \beta -\alpha}{2}\right)^2 \right)+\mathcal{O}(N^{-1})\right]dx \,dy
\end{align*}
and the convergence in the $\mathcal{O}(N^{-1})$-term is uniform on compact subsets of $\R^2$.

To turn this into a weak convergence statement, we need to control integrals over unbounded sets as well, in particular, we need to treat the case $A=\R^2$ to see that $K_3$ converges to $\frac 1 {2 \pi \sqrt{\mathrm{det} \Sigma'}}$ for
\begin{equation}
\label{eq:def_sigma_prime}
\Sigma':= \begin{pmatrix} \left[  \frac{\alpha + \beta}{2} -\left( \frac{\alpha + \beta}{2}\right)^2 \right]^{-1} & 0 \\ 0 &  \left[ \frac{  \beta - \alpha}{2} -\left( \frac{  \beta -\alpha}{2}\right)^2 \right]^{-1}\end{pmatrix}.
\end{equation}
Hence, for any measurable set $A \subset \R^2$ we  write
\begin{align}
&\int_A \Psi(x,y)dx\, dy \nonumber
\\
&= \int_{A \cap B(0,R)}\Psi(x,y)dx\, dy+ \int_{A \cap B(0,R)^c \cap B(0,r \sqrt N)}\Psi(x,y)dx\, dy+ \int_{A \cap B(0,r \sqrt N)^c} \Psi(x,y)dx\, dy\label{split}
\end{align}
where we set
\begin{align*}
\Psi(x,y):= e^{- \Phi(x,y)}.
\end{align*}
Here for any $l >0$ we denote by $B(0,l)$ the ball in $\R^2$ with center in $0$ and radius $l$. Further, we consider numbers $R>0$ and $r>0$ and we will send $R$ to $\infty$ and consider $r$ sufficiently small. We will refer to the summands on the right hand of \eqref{split} as inner region, intermediate region and outer region, respectively. The goal is to see that the inner region contributes all mass to the integral as $R \to \infty$.

As already marked above for fixed $R>0$
\begin{align*}
\lim_{N\to \infty}& \int_{A \cap B(0,R)}  \Psi(x,y) dx\, dy \nonumber\\
=& \int_{A \cap B(0,R)} \exp  \left[ -\frac{1}{2} x^2\left(  \frac{\alpha + \beta}{2} -\left( \frac{\alpha + \beta}{2}\right)^2 \right)
-   \frac{1}{2} y^2\left( \frac{  \beta - \alpha}{2} -\left( \frac{  \beta -\alpha}{2}\right)^2 \right)\right]dx \,dy.
\end{align*}
Next, we treat the outer region, i.e.~$A \cap B(0,r \sqrt N)^c$. 
By the easy estimate $\log \cosh(x) \leq \frac{1}{4}x^2+1$, we see that there is a positive constant $C_{\alpha, \beta}$ such that 
\begin{align*}
-\Phi(x,y) \leq -C_{\alpha, \beta}(x^2+y^2) +N=-\frac{C_{\alpha, \beta}}{2}(x^2+y^2) + (N-\frac{C_{\alpha, \beta}}{2} (x^2+y^2)).
\end{align*}
For $r_0:= \sqrt{\frac{2}{C_{\alpha, \beta}}}$ this estimate implies by the dominated convergence theorem
\begin{equation}\label{outer_region_1}
\lim_{N \to \infty} \int_{A \cap B(0,r_0 \sqrt N)^c}  \Psi(x,y) dx\, dy
 =0
\end{equation}

For $A \cap B(0,r_0 \sqrt N)   \cap B(0,r  \sqrt N)^c $ for some $r<r_0$ let us rewrite the exponent   as
\[
\Phi(x,y) = N \tilde \Phi\left(\frac x {\sqrt N}, \frac y {\sqrt N} \right)
\]
where
\begin{align*}
\tilde \Phi(x,y)
:= & \frac{1}{4} (\alpha + \beta) x^2 +   \frac{1}{4} (  \beta - \alpha) y^2 \\
&\quad -
 \frac 12 \log \cosh \left(\frac{\alpha + \beta}{2} x + \frac{\beta- \alpha}{2} y\right)
- \frac 12 \log \cosh \left( \frac{\alpha + \beta}{2} x - \frac{\beta-\alpha}{2} y\right).
\end{align*}
Analyzing $\tilde \Phi$ we see that it becomes minimal only if, $\nabla \tilde \Phi=0$ and
\[
\nabla \tilde \Phi(x,y) = \left( \begin{array}{l}
\frac 12 c_1 x - \frac{c_1}4 \tanh(\frac{c_1 x+c_2 y}2)-\frac{c_1}4 \tanh(\frac{c_1 x-c_2 y}2)\\
\frac 12 c_2 y - \frac{c_2}4 \tanh(\frac{c_1 x+c_2 y}2)+\frac{c_2}4 \tanh(\frac{c_1 x-c_2 y}2)
\end{array}
\right),
\]
where we abbreviate $c_1:= \alpha+\beta$ and $c_2:=\beta-\alpha$. This means, we aim to solve
\begin{align*}
x & = \frac 12 \tanh\left(\frac{c_1 x+c_2 y}2\right)+\frac{1}2 \tanh\left(\frac{c_1 x-c_2 y}2\right)\\
y &= \frac{1}2 \tanh\left(\frac{c_1 x+c_2 y}2\right)-\frac{1}2 \tanh\left(\frac{c_1 x-c_2 y}2\right).
\end{align*}
This is done in the spirit of the arguments in Section 2. Indeed, denoting by
\[
G(x,y):= \left( \begin{array}{l}
\frac 12 \tanh(\frac{c_1 x+c_2 y}2)+\frac{1}2 \tanh(\frac{c_1 x-c_2 y}2)\\
\frac{1}2 \tanh(\frac{c_1 x+c_2 y}2)-\frac{1}2 \tanh(\frac{c_1 x-c_2 y}2)
\end{array}
\right),
\]
we see that its Jacobian is given by
\[
J_G(x,y) = \left( \begin{array}{ll}
\frac {c_1}{4} \left( \frac 1 {\cosh^2(\frac{c_1 x+c_2 y}2)}+\frac 1 {\cosh^2(\frac{c_1 x-c_2 y}2)} \right)&
\frac {c_2}{4} \left( \frac 1 {\cosh^2(\frac{c_1 x+c_2 y}2)}-\frac 1 {\cosh^2(\frac{c_1 x-c_2 y}2)}  \right)\\
\frac {c_1}{4} \left( \frac 1 {\cosh^2(\frac{c_1 x+c_2 y}2)}-\frac 1 {\cosh^2(\frac{c_1 x-c_2 y}2)} \right)&
\frac {c_2}{4} \left( \frac 1 {\cosh^2(\frac{c_1 x+c_2 y}2)}+\frac 1 {\cosh^2(\frac{c_1 x-c_2 y}2)}  \right)
\end{array}
\right).
\]
This means that
$
||J_G(x,y)||_1 \le \max \left( \frac{c_1}{2}, \frac{c_2}{2}\right)= \frac{c_1}2 <1,
$
where $\| \cdot\|_1$ denotes the  maximum absolute column sum of a matrix.
Thus $G$ is a (strict) contraction with fixed point $(0,0)$ and  hence $\tilde \Phi$ is minimal in $(0,0)$. Therefore, for every $r >0$.
\[
M_r:= \inf_{(x,y) \notin B(0,r)} \tilde \Phi(x,y) >0.
\]
This implies that
\begin{equation}\label{outer_region_2}
\lim_{N \to \infty} \int_{A \cap B(0,r \sqrt N)^c\cap  B(0,r_0 \sqrt N)  }  \Psi(x,y) dx\, dy
  =  \lim_{N \to \infty}e^{-NM_r} \text{Vol}(B(0,r_o\sqrt{N})) 
 =0
\end{equation}
for any $r>0$. Combining \eqref{outer_region_1} and \eqref{outer_region_2}, we see that  the outer region is asymptotically negligible.

Let us turn to the intermediate region.
Here we  take again a Taylor expansion of the $\log \cosh$ on an interval $[-z_0, z_0]$, $z_0>0$, around the origin to first order with a Lagrange bound on the remainder:
$
\log \cosh (z)= \frac {z^2} 2 + C  z^4
$
with a constant $C$ that depends on $z_0$.
This yields for $\frac{x}{\sqrt{N}}, \frac{y}{\sqrt{N}} \in B(0,r)$
\begin{align*}
&N \tilde \Phi\left(\frac x{\sqrt N},\frac y {\sqrt N}\right)
=
\frac{x^2}{2} \left( \frac{\alpha + \beta}{2}\right)+\frac{y^2}{2} \left( \frac{\beta-\alpha }{2}\right)
\\
&\quad  \quad -\frac N4 \left(\frac{(\alpha + \beta)x}{2\sqrt N} + \frac{(\beta-\alpha) y }{2\sqrt N}\right)^2
-\frac N4 \left(\frac{(\alpha + \beta)x}{2\sqrt N} - \frac{(\beta-\alpha) y }{2\sqrt N}\right)^2\\
& \quad  \quad
- C_r N \left[\left(\frac{(\alpha + \beta)x}{2\sqrt N} + \frac{(\beta-\alpha) y }{2\sqrt N}\right)^4
+\left(\frac{(\alpha + \beta)x}{2\sqrt N} - \frac{(\beta-\alpha) y }{2\sqrt N}\right)^4\right]\\
&\ge
\frac{x^2}{2} \left( \frac{\alpha + \beta}{2} - \left( \frac{\alpha + \beta}{2}\right)^2\right)+
\frac{y^2}{2} \left( \frac{\beta-\alpha }{2} - \left( \frac{  \beta-\alpha }{2}\right)^2\right)  -2 C_r N \left( \frac {|x|+|y|}{\sqrt N}\right)^4,
\end{align*}
where we used $\frac{\alpha + \beta}{2} \leq 1$.
However, on $A_R^r:= A \cap B(0,R)^c \cap B(0,r \sqrt N)$ we can estimate the last line by
\begin{equation*}
 2 C_r N \left( \frac {|x|+|y|}{\sqrt N}\right)^4 = 2 C_r   \left( |x|+|y|\right)^2 \left( \frac {|x|+|y|}{\sqrt N}\right)^2 \leq 8 C_r (x^2 + y^2) r^2 =: \tilde{C_r} r^2 (x^2 + y^2)
\end{equation*}
for $\tilde C_r$ uniformly on $B(0, r \sqrt N)$. Note that $\tilde C_r r^2$ depends continuously on $r$ and converges to $0$ as $ r \to 0$. In particular, if $r$ is small enough we have that
$\left( \frac{\alpha + \beta}{2} - \left( \frac{\alpha + \beta}{2}\right)^2\right)-\tilde C_r r^2>0$ as well as
$\left( \frac{\beta-\alpha}{2} - \left( \frac{\beta-\alpha}{2}\right)^2\right)-\tilde C_r r^2>0$.
But for this choice of $r$ and $\tilde C_r$ we arrive at
\begin{align*}
&\int_{A_R^r} \Psi(x,y) \, dx\, dy\nonumber\\
\le & \int_{A_R^r}  \exp  \left[ -\frac{x^2}{2} \left( \frac{\alpha + \beta}{2} - \left( \frac{\alpha + \beta}{2}\right)^2-C_r r^2\right)-
\frac{y^2}{2} \left( \frac{\beta-\alpha }{2} - \left( \frac{  \beta-\alpha }{2}\right)^2-C_r r^2\right)\right]dx  dy
\end{align*}
and the right hand side is an integrable function. Thus for $R \to \infty$ the right hand side as well as the left hand side converges to $0$.

Putting the estimates together, we have seen that $ \chi_{N,\alpha, \beta}$ converges weakly to the 2-dimensional Gaussian distribution with expectation $0$ and covariance matrix
$\Sigma'$, where $\Sigma'$ is given in \eqref{eq:def_sigma_prime}.
This weak convergence is equivalent to the convergence of the characteristic functions.

Computing the characteristic functions of the Gaussian distribution involved in the above proof, we have therefore shown that the characteristic function of $\tilde w$ in the point $t=(t_1, t_2)\in \R^2$ satisfies
\begin{align*}
\lim_{N \to \infty} \mathbb E(e^{it \tilde{w}}) e^{-\frac{1}{2}(\frac{2}{\alpha +\beta})t_1^2 -\frac{1}{2}(\frac{2}{\beta- \alpha})t_2^2 }=e^{-\frac{1}{2}t_1^2 [  \frac{\alpha + \beta}{2} -\left( \frac{\alpha + \beta}{2}\right)^2]^{-1} - \frac{1}{2} t_2^2  [  \frac{  \beta - \alpha}{2} -\left( \frac{  \beta -\alpha}{2}\right)^2]^{-1}}.
\end{align*}
This implies that
\begin{align*}
\lim_{N \to \infty} \mathbb E(e^{it \tilde{w}}) &= e^{\frac{1}{2}(\frac{2}{\alpha +\beta})t_1^2+ \frac{1}{2}(\frac{2}{\beta- \alpha})t_2^2 } e^{-\frac{1}{2}t_1^2 [  \frac{\alpha + \beta}{2} -\left( \frac{\alpha + \beta}{2}\right)^2]^{-1} - \frac{1}{2} t_2^2  [  \frac{  \beta - \alpha}{2} -\left( \frac{  \beta -\alpha}{2}\right)^2]^{-1}}
\\
&=e^{-\frac{1}{2}t_1^2 \left(\frac{1}{1-\frac{\alpha+\beta}{2}}\right)-\frac{1}{2}t_2^2 \left(\frac{1}{1-\frac{ \beta - \alpha}{2}}\right)}.
\end{align*}
Turning this into a weak convergence statement again, we obtain
\begin{equation*}
(\tilde{w_1}, \tilde{w_2}) \xrightarrow{N\to \infty} \mathcal N(0, \Sigma), \quad \Sigma=\begin{pmatrix}\frac{1}{1-\frac{\alpha+\beta}{2}} &0 \\ 0&  \frac{1}{1-\frac{ \beta - \alpha}{2}}\end{pmatrix}
\end{equation*}
in distribution.
\end{proof}

\begin{proof}[Proof of Theorem \ref{CLT_m}]
The proof of Theorem \ref{CLT_m} is straightforward from the above lemma. As observed we have that $m_1=w_1+w_2$ and $m_2=w_1-w_2$, thus $\sqrt N m_1=\sqrt N (w_1+w_2)=\tilde{w_1} + \tilde{w_2}$ as well as $\sqrt N m_2=\sqrt N (w_1-w_2)=\tilde{w_1}-\tilde{w_2}$.
Thus Lemma \ref{lemma_w} gives that $m_1$ and $m_2$ are asymptotically normal with mean 0 and variance
\[
\lim_{N \to \infty}\V(\sqrt N m_1)= \frac 1 {1-\frac{\beta+\alpha}2}+\frac 1 {1-\frac{\beta-\alpha}2}= \frac 2 {2-\beta-\alpha}+\frac 2 {2-\beta+\alpha}= \frac {4 (2-\beta)}{(2-\beta)^2-\alpha^2}.
\]
Moreover, the same considerations together with Lemma \ref{lemma_w} show that their covariance is given by
\[
\lim_{N \to \infty}\mathrm{Cov}(\sqrt N m_1, \sqrt N m_2)= \V \tilde w_1 - \V \tilde w_2= \frac {4 \alpha}{(2-\beta)^2-\alpha^2}=
\frac {4 (2-\beta)}{(2-\beta)^2-\alpha^2}\frac{\alpha}{2-\beta}
\]
as proposed.
\end{proof}

On the other hand, the proof Lemma \ref{lemma_w} also inspires the proof of Theorem \ref{CLT_m_crit}. Indeed, redoing the computations there shows that for $\alpha+\beta=2$ the quadratic term in the first component of $\chi_{N, \alpha, \beta}$ cancels. To this end we have to rescale $\tilde w_1$ to make the second term in the Taylor expansion $\log \cosh$ appear (as a matter of fact this is very similar, to what happens in the Curie-Weiss model at its critical temperature $\beta=1$.

\begin{proof}[Proof of Theorem \ref{CLT_m_crit}]
As motivated above we will now consider the vector $\hat w=(\hat w_1, \hat w_2)$ consisting of the components
\[
\hat w_1:= N^{1/4} w_1 \qquad \mbox{and } \quad \hat w_2:= \sqrt N w_2 = \tilde w_2.
\]
This time we will convolute the distribution of $\hat w$ under the Gibbs measure $\mu_{N,\alpha,\beta, S}$ with a two-dimensional Gaussian distribution $\mathcal{N}(0,\hat C)$, where
$
\hat C=  \begin{pmatrix}
\frac{1}{\sqrt N} & 0\\
0 & \frac{2}{\beta-\alpha}
\end{pmatrix}
$
(note that this is well defined since $\beta > \alpha$).
Computing the density of
$\hat \chi_{N,\alpha,\beta} :=
\mu_{N,\alpha,\beta}(\hat{w})^{-1} * {\mathcal N}(0, \hat C)$ as in the proof of Lemma \ref{lemma_w} we obtain
building on the fact that now $\alpha+\beta=2$
\begin{align*}
&  \hat \chi_{N,\alpha, \beta} (A) \\
 = & \hat K_1
\sum_{\sigma \in \{-1,1\}^N}
\int_{A} \exp \left( -\frac{1}{2}( \sqrt N(x- \hat{w_1})^2  +  \frac{\beta - \alpha}{2} (y-\hat {w_2})^2\right) \varrho_{N,\alpha, \beta}(\sigma) dx dy\\
 = & \hat K_2
\sum_{\sigma \in \{-1,1\}^N}
\int_{A}    \exp \left( -\frac{1}{2}( \sqrt N (x- \hat{w_1})^2  +  \frac{\beta - \alpha}{2} (y-\hat{w_2})^2\right)\\
&\hskip 3cm \times \exp\left(\frac{1}{4} (2 \sqrt N \hat{w_1}^2 + (\beta-\alpha) \tilde{w_2}^2)\right) dx dy\\
 = & \hat K_2
\sum_{\sigma \in \{-1,1\}^N}
\int_{A}    \exp \left( -\frac{\sqrt N}{2} x^2 -   \frac{1}{4} (  \beta - \alpha) y^2 \right)%\\
%&\hskip 3cm
\exp\left(\sqrt N x \hat{w_1} + \frac{1}{2} (\beta  - \alpha ) y \tilde{w_2}\right)dx dy\\
 = & \hat K_3
\int_{A}    \exp\left( -\frac{\sqrt N}{2}x^2 -   \frac{1}{4} (  \beta - \alpha) y^2 \right)\exp\left( \frac N2 \log \cosh \left( \frac{x}{N^{1/4}} + \frac{\beta- \alpha}{2\sqrt N} y\right)\right.\\
 & \qquad \qquad \left.+  \frac N2 \log \cosh \left( \frac{x}{N^{1/4}} - \frac{\beta-\alpha}{2\sqrt N} y\right)\right)dx dy
\end{align*}
with the normalizing constants $\hat K_1, \hat K_2$, and $\hat K_3$ chosen similarly to $K_1, K_2$, and $K_3$ in the proof of Lemma \ref{lemma_w}.
Now we expand the $\log \cosh$ to fourth order:
$
\log \cosh(z)= \frac{z^2} 2 - \frac 1 {12}  z^4 +\mathcal{O}(z^6).
$
We thus see that the $x^2$ terms in the exponent cancel and so do the $xy$-terms (fortunately). For fixed $x$ and $y$ only the $x^4$ is of vanishing order. The $y^2$ terms are treated as in the proof of Lemma \ref{lemma_w}.
We thus see that
\begin{align*}
-&\frac{\sqrt N}{2}x^2 -   \frac{1}{4} (  \beta - \alpha) y^2 + \frac N2 \log \cosh \left( \frac{x}{N^{1/4}} + \frac{\beta- \alpha}{2\sqrt N} y\right)+  \frac N2 \log \cosh \left( \frac{x}{N^{1/4}} - \frac{\beta-\alpha}{2\sqrt N} y\right)\\
=& -\frac{1}{12} x^4 -   \frac{1}{2} y^2\left( \frac{  \beta - \alpha}{2} -\left( \frac{  \beta -\alpha}{2}\right)^2 \right)+\mathcal{O}(N^{-\frac 12})
\end{align*}
with a $\mathcal{O}(N^{-\frac 12})$ term that depends on $x$ and $y$. To conclude the convergence of $\hat \chi_{N,\alpha, \beta} (A)$ we now proceed as in the proof of Lemma \ref{lemma_w}. Here we will only sketch the differences, because many steps are very similar. The exact steps are left to the reader.
The main differences to the above proof of Lemma \ref{lemma_w} is that the inner region is again $B(0,R)$, while the intermediate region now is the rectangle $[-  N^{\frac 14} r, N^{\frac 14} r] \times [-r\sqrt N,r\sqrt N]$ to take into account that the Taylor expansion gives another order for $x$ than for $y$. Correspondingly in the intermediate region the integrable function that dominates
\begin{align*}
&\exp\left[ -\frac{\sqrt N}{2}x^2 -   \frac{1}{4} (  \beta - \alpha) y^2 \right]
\\
& \quad \quad
\times \exp\left[ \frac N2 \log \cosh \left( \frac{x}{N^{1/4}} + \frac{\beta- \alpha}{2\sqrt N} y\right)   +  \frac N2 \log \cosh \left( \frac{x}{N^{1/4}} - \frac{\beta-\alpha}{2\sqrt N} y\right)\right]
\end{align*}
 is given by $\exp(-d_1 x^4-d_2y^2)$ for suitable constants $d_1, d_2>0$. With these changes we see that $\hat \chi_{N,\alpha, \beta} (A)$ converges to a 2-dimensional distribution with density proportional to
\[
\exp\left(-\frac{1}{12} x^4 -   \frac{1}{2} y^2\left( \frac{  \beta - \alpha}{2} -\left( \frac{  \beta -\alpha}{2}\right)^2 \right) \right)
\]
with respect to the 2-dimensional Lebesgue measure.  However, from here we see that $\hat w_1$ converges in distribution to a random variable with density proportional to $e^{-\frac{1}{12} x^4}$ since the Gaussian measure we convoluted the first coordinate of $\hat w$ with converges to 0 in probability. Moreover, the same computation as in Lemma \ref{lemma_w} shows that $\hat w_2= \tilde w_2=\frac{\sqrt{N}}{2}(m_1-m_2)$ converges to a normal distribution with mean $0$ and variance $\frac 2 {2-(\beta-\alpha)}$.

However, the latter convergence implies that $N^{\frac 14 }w_2$ converges to $0$ in probability. Thus $N^{\frac 14 }m_1= N^{\frac 14 } w_1+ N^{\frac 14 }w_2$ also converges in distribution to a random variable
with density proportional to $e^{-\frac{1}{12} x^4}$ (see e.g. \cite[Theorem 3.1]{billingsley}).
\end{proof}

\medskip
\noindent{\bf Acknowledgment}: We are grateful to an anonymous referee for many valuable remarks.

%\bibliographystyle{abbrv}
%\bibliography{LiteraturDatenbank}

%
% 

\end{document}